\documentclass[12pt,amstex]{amsart}

\usepackage{mathptmx}
\usepackage{mathrsfs}

\usepackage{verbatim}
\usepackage{url}
\usepackage[all]{xy}
\usepackage{color}

\usepackage[colorlinks=true,citecolor=blue]{hyperref}

\usepackage{stmaryrd}
\usepackage{epsfig}
\usepackage{amsmath}
\usepackage{amssymb}
\usepackage{appendix}
\usepackage{amscd}
\usepackage{graphicx}
\usepackage{pstricks}

\topmargin=0pt \oddsidemargin=0pt \evensidemargin=0pt
\textwidth=15cm \textheight=22cm \raggedbottom

\theoremstyle{plain}
\newtheorem{thm}{Theorem}[section]
\newtheorem{lem}[thm]{Lemma}

\newtheorem{prop}[thm]{Proposition}

\theoremstyle{definition}
\newtheorem{de}[thm]{Definition}

\theoremstyle{remark}
\newtheorem{rem}[thm]{Remark}


\def \N {\mathbb N}

\def \A {\mathcal A}

\def \a {\alpha }

\def \ep {\epsilon}

\begin{document}
\title{Feldman-Katok pseudo-orbits and topological pressure}

\author{Fangzhou Cai}

\address[F. Cai]{School of Mathematics and Systems Science, Guangdong Polytechnic Normal University, Guangzhou, 510665, PR China}
\email{cfz@mail.ustc.edu.cn}

\maketitle

\begin{abstract}
In this paper we introduce the notion of Feldman-Katok pseudo-orbits and use it to study topological pressure. We prove that the topological pressure of a dynamical system can be computed  by measuring  the Feldman-Katok pseudo-orbits complexity of the shift map on the potential function, which extends Barge and  Swanson's result on entropy and pseudo-orbits \cite{pse2}.
\end{abstract}

\section{Introduction}

By a {\it topological dynamical system} (TDS for short) we mean a pair $(X,T)$ where $X$ is a compact metric space with metric $d$ and $T:X\to X$ is a continuous  map.
Topological entropy was introduced originally by Adler, Konheim and McAndrew  \cite{entro} in 1965. Later, Dinaburg \cite{entro1} and Bowen \cite{entro2} gave several equivalent
definitions by using separated and spanning sets. Bowen also gave a
characterization of dimension type for topological entropy, which  generalized the definition  on non-compact sets \cite{6}. Topological entropy is an important dynamical invariant of topological conjugacy. Roughly speaking,  Topological entropy measures the maximal exponential growth rate of different forward orbits for a topological dynamical system.

In the years since their development by Bowen \cite{bo} and Conley \cite{co}, pseudo-orbits have
proved to be a powerful tool in dynamical systems.
A remarkable result by Misiurewicz
\cite{pse1} stated that the topological entropy can be computed by measuring the exponential growth rate of the
numbers of pseudo-orbits. Later Barge and  Swanson \cite{pse2} gave a similar result for periodic pseudo-orbits. In \cite{pse3}, Hurley
considered pseudo-orbits for inverse images and showed that the point entropy of pseudo-orbits is in fact equal to the topological entropy.
 In \cite{yz}, Yan and Zeng obtained similar results for continuous maps on compact uniform spaces. Recently, Cheng and Li used pseudo-orbits to characterize the notion of scaled pressure \cite{cl}.


As a non-trivial and natural generalization of topological entropy, the notion of topological pressure was introduced  in  dynamical systems by Ruelle \cite{ru}   inspired by the theory of Gibbs states in statistical mechanics. Later Walters  \cite{wal} extended it to general topological dynamical systems. From a viewpoint of dimension theory, Pesin and Pitskel \cite{31} generalized Bowen’s definition of
topological entropy and defined the
topological pressure on non-compact sets.
Topological pressure is a fundamental notion in thermodynamic
formalism  and constitute the main components
of the thermodynamic formalism (\cite{pre1,pre2,pre3,pre4}).

\medskip

In 2017, Kwietniak and \L acka \cite{kl} introduced the Feldman-Katok
metric  as the topological counterpart of edit
distance $\bar{f}$ which was introduced by Feldman \cite{fk1} to study loosely Bernoulli system (see also \cite{fk2,fk3}). In recent years, the  Feldman-Katok
metric proved to be a useful tool in dynamical systems. In \cite{gk}, the authors  used it to characterize zero
entropy loosely Bernoulli systems and presented a
purely topological characterization of their topological models. In \cite{fk4}, Downarowicz, Kwietniak and \L acka introduced
the idea of $\bar{f}$-pseudometric to  finite-valued
stationary stochastic processes and used it to study the entropy rate. In \cite{cai},
Cai and Li  gave
entropy formulas defined by Feldman-Katok metric.

 
 The advantage to use Feldman-Katok metric  is that  it allows time delay by ignoring the synchronization of points in orbits with only order preserving required. Inspired by previous nice works, in this paper, we   bring the  idea  from the definition of Feldman-Katok metric into pseudo-orbits and attempt to use it to study topological pressure. 
 
 The paper is organized as follows: We first introduce  the notion of   Feldman-Katok pseudo-orbits. Then we give a formula for topological pressure with respect to Feldman-Katok pseudo-orbits.  We also consider scaled pressure and show that in \cite[Theorem F]{cl}, the    Lipschitz continuity of $T$ is redundant.
 Finally we introduce and study topological pressure with respect to Feldman-Katok metric.

\section{ Feldman-Katok pseudo-orbits and topological pressure}

In this section we introduce the notion of Feldman-Katok pseudo-orbits and use it to study topological pressure. Our idea is  from the definition of Feldman-Katok metric,  which allows time delay  in orbits with only order preserving required.

\subsection{Feldman-Katok pseudo-orbits}
Let $(X,T)$ be a TDS
and $\a>0$. Recall that we say $(x_0,x_1,\ldots)\in X^\N$ is an {\it$\a$-pseudo-orbit} if $$d(Tx_i,x_{i+1})\leq\a, i=0,1\ldots,n\ldots.$$ 
Denote by $PO_\a(X,T)$ or simply $PO_\a$ the set of $\a$-pseudo-orbits.

We bring time delay and ``jump" in classical pseudo-orbits and give the definition of  Feldman-Katok pseudo-orbits.
\begin{de}
	Let $(X,T)$ be a TDS, $n\in\N$ and $\a,\delta>0.$
We say $(x_0,x_1,\ldots,x_{n-1})\in X^n$ is a {\it $FK$-$\alpha$-pseudo-chain of density $1-\delta$} if there exist $(1-\delta)n<k\leq n$ and $$0\leq i_1<i_2<\ldots<i_k\leq n-1,$$ $$ 0\leq j_1<j_2<\ldots<j_k\leq n-1$$ such that $$d(T^{j_{t+1}-j_t}(x_{i_t}),x_{i_{t+1}})\leq\alpha,t=1,\ldots,k-1.$$ 
\end{de}
\begin{rem}
	In the definition above, if we choose $k=n$ and $i_1=j_1=0,\ldots,i_n=j_n=n-1$, we get the  definition of   $\alpha$-pseudo-chain (see \cite{pse2}).
\end{rem}
\begin{de}
Let $(X,T)$ be a TDS,
 $\delta>0$ and $N_\delta\in \N$. We say $(x_0,x_1,\ldots)\in X^\N$ is a {\it$FK$-$\alpha$-pseudo-orbit of density $1-\delta$  controlled by $N_\delta$} if there exist a sequence  $ 0\leq s_0<s_1<\ldots<s_i<\ldots$ in $\N$ with  $$s_0\leq N_\delta, s_{i+1}-s_i\leq\ N_\delta,i=0,1,\ldots,$$ such that for all   $m>n$, $(x_{s_n},x_{s_n+1},\ldots,x_{s_m-1})$ is a $FK$-$\alpha$-pseudo-chain of density $1-\delta$. 
\end{de}Denote  by $FKPO_{\alpha,\delta}(X,T, N_\delta)$ the set of $FK$-$\alpha$-pseudo-orbit of density $1-\delta$ controlled by $N_\delta$.
\begin{rem}
	\begin{enumerate}
		\item $N_\delta$ is needed to control the length, for we must avoid the situation that  the first finite components of a point ${\bf x}$ can be chosen arbitrary, since  in this situation we  obtain a dense set in $X^\N$.
		\item  It is easy to see that $FKPO_{\alpha,\delta}(X,T,N_\delta)$ contains $PO_\a(X,T)$	 by  definition.
		\item   We can see points with the following form $$(x,y_1,T^{2}x,y_3,\ldots,T^{2n}x,y_{2n+1},\ldots)$$ are in $ FKPO_{\alpha,\delta}(X,T,N_\delta)$ for $\a>0,\delta>\frac{1}{2}$ and $N_\delta\geq 4.$ Hence there is an essential difference between $FK$-pseudo-orbits and   pseudo-orbits.
		\item  If $\delta N_\delta<1$, the choice of $k$  in the definition is only $n$, hence to avoid triviality, we can make $\delta N_\delta$ large enough by choosing appropriate $N_\delta$.
		\item It is easy to see that  $FKPO_{\alpha,\delta}(X,T,N_\delta)$ is monotonically increasing with respect to $\a$. But $FKPO_{\alpha,\delta}(X,T,N_\delta)$ may not monotonous  with respect to $\delta$.
	\end{enumerate}
\end{rem}

\medskip

In the sequel we fix $\{N_\delta: \delta>0\}\subset \N$.

 We simply write $FKPO_{\alpha,\delta}$ instead of $FKPO_{\alpha,\delta}(X,T, N_\delta)$.	

\medskip

The following proposition shows that $FKPO_{\alpha,\delta}$ has a good structure.
\begin{prop}
	Let $(X,T)$ be a TDS and $\alpha,\delta>0$. Then $FKPO_{\alpha,\delta}$ is closed in $X^\N$ and invariant under the shift $\sigma$.
\end{prop}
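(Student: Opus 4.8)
The argument splits into the (easy) shift-invariance and the (substantive) closedness; I would dispose of shift-invariance first.

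\emph{Shift-invariance.} Starting from ${\bf x}=(x_0,x_1,\ldots)\in FKPO_{\alpha,\delta}$ with a witnessing index sequence $0\le s_0<s_1<\cdots$ (so $s_0\le N_\delta$, $s_{i+1}-s_i\le N_\delta$, and every block $(x_{s_n},\ldots,x_{s_m-1})$ with $m>n$ is an $FK$-$\alpha$-pseudo-chain of density $1-\delta$), I would produce a witnessing index sequence for $\sigma{\bf x}=(x_1,x_2,\ldots)$ by translating the $s_i$: put $s_i'=s_i-1$ if $s_0\ge 1$, and $s_i'=s_{i+1}-1$ if $s_0=0$. In both cases $0\le s_0'<s_1'<\cdots$ with $s_0'\le N_\delta$ and $s_{i+1}'-s_i'\le N_\delta$, and under the identification $(\sigma{\bf x})_j=x_{j+1}$ the block of $\sigma{\bf x}$ indexed by $s_n',\ldots,s_m'-1$ is a block $(x_{s_{n'}},\ldots,x_{s_{m'}-1})$ of ${\bf x}$ (with $(n',m')=(n,m)$ in the first case and $(n+1,m+1)$ in the second), hence again an $FK$-$\alpha$-pseudo-chain of density $1-\delta$. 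Thus $\sigma{\bf x}\in FKPO_{\alpha,\delta}$, and I expect no difficulty here.

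\emph{Closedness.} The step I would isolate first is that, for each fixed $\ell\in\N$, the set $C_\ell\subseteq X^\ell$ of $\ell$-tuples which are $FK$-$\alpha$-pseudo-chains of density $1-\delta$ is closed: there are only finitely many admissible ``shapes'' $\tau=(k,\,i_1<\cdots<i_k,\,j_1<\cdots<j_k)$ with $(1-\delta)\ell<k\le\ell$ and all indices in $\{0,\ldots,\ell-1\}$, and for each $\tau$ the set $\{(y_0,\ldots,y_{\ell-1})\in X^\ell: d(T^{j_{t+1}-j_t}(y_{i_t}),y_{i_{t+1}})\le\alpha,\ t=1,\ldots,k-1\}$ is closed, since $T$ (hence each iterate $T^r$) and $d$ are continuous, so every defining inequality determines a closed subset of $X^\ell$; then $C_\ell$ is the finite union of these. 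Next, since $X^\N$ is compact metrizable with coordinatewise convergence, I would verify sequential closedness. Given ${\bf x}^{(k)}\in FKPO_{\alpha,\delta}$ with ${\bf x}^{(k)}\to{\bf x}$, choose witnessing control sequences $(s_i^{(k)})_i$; because $s_0^{(k)}\in\{0,\ldots,N_\delta\}$ and every gap $s_{i+1}^{(k)}-s_i^{(k)}$ lies in $\{1,\ldots,N_\delta\}$, a diagonal extraction yields a subsequence along which, for each fixed $i$, $s_i^{(k)}$ is eventually equal to some $s_i$, with $0\le s_0<s_1<\cdots$, $s_0\le N_\delta$ and $s_{i+1}-s_i\le N_\delta$. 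Fixing $m>n$, for all large $k$ along this subsequence $s_n^{(k)}=s_n$ and $s_m^{(k)}=s_m$, so $(x_{s_n}^{(k)},\ldots,x_{s_m-1}^{(k)})=(x_{s_n^{(k)}}^{(k)},\ldots,x_{s_m^{(k)}-1}^{(k)})\in C_{s_m-s_n}$; letting $k\to\infty$ and invoking coordinatewise convergence together with the closedness of $C_{s_m-s_n}$, I obtain $(x_{s_n},\ldots,x_{s_m-1})\in C_{s_m-s_n}$. As $m>n$ were arbitrary, $(s_i)_i$ witnesses ${\bf x}\in FKPO_{\alpha,\delta}$.

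The main obstacle is keeping simultaneous control of all the combinatorial data attached to the approximating points: the control sequences $(s_i^{(k)})$ and, block by block, the witnessing shapes $\tau$. The uniform bound $N_\delta$ on the gaps is exactly what turns the control sequences into a routine diagonal extraction; and rather than performing a second diagonalization (over the countably many pairs $m>n$) to stabilize the shapes, it is cleaner to absorb that pigeonhole once and for all into the observation that $C_\ell$ is a \emph{finite} union of closed sets, hence closed. A minor point to check is that the control bounds $s_0\le N_\delta$ and $s_{i+1}-s_i\le N_\delta$ survive in the limit, which is immediate since $s_i^{(k)}=s_i$ for all large $k$.
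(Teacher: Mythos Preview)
Your proof is correct and follows essentially the same approach as the paper: a diagonal extraction on the control sequences $(s_i^{(k)})$ (using the uniform bound $N_\delta$), followed by passing to the limit in the block condition. Your packaging of the second step---observing once that $C_\ell$ is a finite union of closed sets, hence closed---is slightly tidier than the paper's version, which instead fixes $m>n$ and performs an explicit second pigeonhole on the finitely many shapes $(k;\,i_1,\ldots,i_k;\,j_1,\ldots,j_k)$ before letting $h\to\infty$; the substance is the same.
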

\begin{proof}
Since there is no restriction on $(x_0,\ldots,x_{s_0-1})$ in the definition of $FKPO_{\alpha,\delta}$,	it is easy to see that  $FKPO_{\alpha,\delta}$ is   invariant under $\sigma$.
		Now we prove $FKPO_{\alpha,\delta}$ is closed. 
		
	Let ${\bf x}^h\in FKPO_{\alpha,\delta}$ with ${\bf x}^h\rightarrow {\bf x}$.
	By definition there exist  a sequence $s_0^h<s_1^h<\ldots<s_i^h<\ldots$  with $$s_0^h\leq\ N_\delta, s_{i+1}^h-s_i^h\leq N_\delta,   \ i=0,1,\ldots,$$ such that for all   $m>n$, $(x_{s_n^h}^h,x_{s_n^h+1}^h,\ldots,x_{s_m^h-1}^h)$ is a $FK$-$\alpha$-pseudo-chain of density $1-\delta$. We can find  $ s_0<s_1<\ldots<s_i<\ldots$ with $$s_0\leq N_\delta, s_{i+1}-s_i\leq N_\delta,i=0,1,\ldots,$$ such that for any $l\in\N$, there exist infinite $h$ satisfy $$(s_0^h,s_1^h,\ldots,s_l^h)=(s_0,s_1,\ldots,s_l).$$ Fix  $m>n$. Now we prove $(x_{s_n},x_{s_n+1},\ldots,x_{s_m-1})$ is a $FK$-$\alpha$-pseudo-chain of density $1-\delta$.  For $m$, there exist infinite $h$ with $$(s_0^{h},s_1^{h},\ldots,s_m^{h})=(s_0,s_1,\ldots,s_m).$$ For such $h$, since  $$(x_{s_n^{h}}^{h},x_{s_n^{h}+1}^{h},\ldots,x_{s_m^{h}-1}^{h})=(x_{s_n}^{h},x_{s_n+1}^{h},\ldots,x_{s_m-1}^{h})$$ is a $FK$-$\alpha$-pseudo-chain of density $1-\delta$, there exist  $k_h$ with $$(1-\delta)(s_m-s_n)<k_h\leq (s_m-s_n)$$ and $$0\leq i_1^h<i_2^h<\ldots<i_{k_h}^h\leq s_m-s_n-1,$$ $$0\leq j_1^h<j_2^h<\ldots<j_{k_h}^h\leq s_m-s_n-1,$$   such that  $$d(T^{j_{t+1}^h-j_t^h}x_{i_t^h+s_n}^h,x_{i_{t+1}^h+s_n}^h)\leq\alpha, t=1,\ldots,k_h-1.$$   We can choose some  $k$ with $$(1-\delta)(s_m-s_n)<k\leq (s_m-s_n)$$  and $$0\leq i_1<i_2<\ldots<i_k\leq  s_m-s_n-1,$$ $$0\leq j_1<j_2<\ldots<j_k\leq  s_m-s_n-1,$$ such that there exist infinite $h$ satisfy $k_h=k$ and $$ (i_1^h,i_2^h,\ldots,i_{k}^h)=(i_1,i_2,\ldots,i_{k}),$$ $$(j_1^h,j_2^h,\ldots,j_{k}^h)=(j_1,j_2,\ldots,j_{k}).$$ For such $h$ we have $$d(T^{j_{t+1}-j_t}x_{i_t+s_n}^h,x_{i_{t+1}+s_n}^h)\leq\alpha, t=1,\ldots,k-1.$$ Let $h\to \infty$ we have  $$d(T^{j_{t+1}-j_t}x_{i_t+s_n},x_{i_{t+1}+s_n})\leq\alpha, t=1,\ldots,k-1.$$ Hence $(x_{s_n},x_{s_n+1},\ldots,x_{s_m-1})$ is a $FK$-$\alpha$-pseudo-chain of density $1-\delta$.
\end{proof}

\subsection{Topological pressure with respect to Feldman-Katok pseudo-orbits}
In this subsection we use   Feldman-Katok pseudo-orbits  to study topological pressure.
 
 \medskip
 
 First let us recall some basic concepts.

Let $(X,T)$ be a TDS. For $x,y\in X,$  $n\in\N$ and $\ep>0$, set $$d_n(x,y)=\max_{0\leq i\leq n-1}d(T^ix,T^iy)$$
be the Bowen metric.
We say $E\subset X$ is  an {\it$(n,\ep)$-separated set of $X$}  if for each $ x\neq  y\in E$,  $d_n(x,y)>\ep$.
We say $F\subset X$  is an {\it $(n,\ep)$-spanning set  of  $X$} if for every $x\in X$ there is $y\in F$ with $d_n(x,y)\leq\ep.$

Let  $f\in C(X)$, define $$sp_{n,\ep}(X,T,f)=\inf\{\sum_{x\in F}e^{\sum_{i=0}^{n-1}f(T^ix)}: F \text{ is\ an }(n,\ep) \text{-spanning\ set \ of } X\},$$
$$sr_{n,\ep}(X,T,f)=\sup\{\sum_{x\in E}e^{\sum_{i=0}^{n-1}f(T^ix)}: E \text{ is\ an }(n,\ep) \text{-separated\ set \ of } X\}.$$
It is known that the topological  pressure of $(X,T)$ with potential $f$ is
	defined by
\begin{equation*}
	\begin{split}
		P(X,T,f)&=\lim_{\ep\to0}\varlimsup\limits_{n\to\infty}\frac{1}{n}\log sp_{n,\ep}(X,T,f)\\
		&=\lim_{\ep\to0}\varlimsup\limits_{n\to\infty}\frac{1}{n}\log sr_{n,\ep}(X,T,f).	
	\end{split}
\end{equation*}

\bigskip

 For ${\bf x}=(x_0,x_1,\ldots),{\bf y}=(y_0,y_1,\ldots)$ in $X^\N,$ the metric $\tilde{d}$ on $X^\N$ is defined by  $$\tilde{d}({\bf x},{\bf y})=\sum_{i=0}^{\infty}\frac{d(x_i,y_i)}{2^i}.$$  
  For $Y\subset X^\N$, we say $E\subset Y$ is  an {\it$(n,\ep)$-separated set of $Y$} if for each ${\bf x}\neq {\bf y}$ in $E$, there exists $0\leq i\leq n-1$ such that $d(x_i,y_i)>\ep$.
 We say $F\subset Y$  is an {\it $(n,\ep)$-spanning set  of  $Y$} if for every ${\bf x}\in Y,$ there is ${\bf y}\in F$ with $d(x_i,y_i)<\ep,0\leq i\leq n-1.$
 
For $\alpha,\delta>0$ and $f\in C(X)$, define \begin{equation*}
\begin{split}
&FKPOsp_{n,\ep; \alpha,\delta}(X,T,f)=\\
&\inf\{\sum_{{\bf x}\in F}e^{\sum_{i=0}^{n-1}f(x_i)}: F \text{ is\ an }(n,\ep) \text{-spanning\ set \ of } FKPO_{\alpha,\delta}\},
\end{split}
\end{equation*}
\begin{equation*}
\begin{split}
 &FKPOsr_{n,\ep;\alpha,\delta}(X,T,f)=\\
 &\sup\{\sum_{{\bf x}\in E}e^{\sum_{i=0}^{n-1}f(x_i)}: E \text{ is\ an }(n,\ep) \text{-separated\ set \ of } FKPO_{\alpha,\delta}\}.
\end{split}
\end{equation*}
 We simply write $FKPOsp_{n,\ep;\alpha,\delta}$ and $FKPOsr_{n,\ep;\alpha,\delta}$ if there is no confusion.
 
 \medskip
 
Similar as the definition in \cite{pse2}, we
	define \begin{equation*}
		\begin{split}
			PFKPO(X,T,f,\ep,\delta)&=\inf_{\alpha> 0}\varlimsup\limits_{n\to\infty}\frac{1}{n}\log FKPOsp_{n,\ep;\alpha,\delta}(X,T,f)\\&=\lim_{\alpha\to 0}\varlimsup\limits_{n\to\infty}\frac{1}{n}\log FKPOsp_{n,\ep;\alpha,\delta}(X,T,f).
		\end{split}
	\end{equation*}
The limit exists since $FKPO_{\alpha,\delta}$ is monotonically increasing with respect to $\a$. Now we define
	$$PFKPO(X,T,f,\ep)=\varlimsup_{\delta\to0}	PFKPO(X,T,f,\ep,\delta).$$ 
Note that here $FKPO_{\alpha,\delta}$ may not monotonous  with respect to $\delta$, hence ``$\varlimsup\limits_{\delta\to0}$" can not be replaced by ``$\inf\limits_{\delta> 0}$" or ``$\lim\limits_{\delta\to0}$".

 It is easy to see $PFKPO(X,T,f,\ep)$ is non-increasing with respect to $\ep$. We define
$$PFKPO(X,T,f)=\lim_{\ep\to 0}PFKPO(X,T,f,\ep)$$
 be the {\it topological pressure with respect to Feldman-Katok pseudo-orbits.}

\medskip

The next lemma shows that we can use separated set in the definition above.
 \begin{lem}\label{lem}
 	Let $(X,T)$ be a TDS and $f\in C(X).$ Then
$$PFKPO(X,T,f)=\lim_{\ep\to 0}\varlimsup_{\delta\to 0}\inf_{\alpha>0}\varlimsup\limits_{n\to\infty}\frac{1}{n}\log FKPOsr_{n,\ep;\alpha,\delta}(X,T,f).$$
	\end{lem}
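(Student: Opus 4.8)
The plan is to deduce the lemma from the two classical comparisons between spanning and separated sets, now carried out inside the compact set $FKPO_{\alpha,\delta}$ and with the weights $e^{\sum_{i=0}^{n-1}f(x_i)}$ carefully tracked. Fix $\alpha,\delta>0$ and $n\in\N$; by the preceding proposition $FKPO_{\alpha,\delta}$ is closed in $X^\N$, hence compact, so finite $(n,\ep)$-spanning sets of $FKPO_{\alpha,\delta}$ exist, and maximal $(n,\ep)$-separated subsets of $FKPO_{\alpha,\delta}$ exist and are finite. First I would record the elementary bound
$$FKPOsp_{n,\ep;\alpha,\delta}(X,T,f)\leq FKPOsr_{n,\ep/2;\alpha,\delta}(X,T,f).$$
Indeed, if $E\subset FKPO_{\alpha,\delta}$ is a maximal $(n,\ep/2)$-separated set then maximality forces, for every ${\bf x}\in FKPO_{\alpha,\delta}$, the existence of ${\bf y}\in E$ with $d(x_i,y_i)\leq\ep/2<\ep$ for $0\leq i\leq n-1$, so $E$ is also $(n,\ep)$-spanning; since $E$ is $(n,\ep/2)$-separated this gives $FKPOsp_{n,\ep;\alpha,\delta}\leq\sum_{{\bf x}\in E}e^{\sum_{i=0}^{n-1}f(x_i)}\leq FKPOsr_{n,\ep/2;\alpha,\delta}$.

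Next I would prove the reverse comparison, which is the step where the potential genuinely intervenes. Given $\eta>0$, uniform continuity of $f$ on $X$ furnishes $\ep_0>0$ with $|f(a)-f(b)|<\eta$ whenever $d(a,b)<\ep_0$; I claim that for every $0<\ep\leq\ep_0$,
$$FKPOsr_{n,2\ep;\alpha,\delta}(X,T,f)\leq e^{n\eta}\,FKPOsp_{n,\ep;\alpha,\delta}(X,T,f).$$
To see this, let $E\subset FKPO_{\alpha,\delta}$ be $(n,2\ep)$-separated and $F\subset FKPO_{\alpha,\delta}$ an $(n,\ep)$-spanning set, and assign to each ${\bf x}\in E$ a point $\phi({\bf x})\in F$ with $d(x_i,(\phi({\bf x}))_i)<\ep$ for $0\leq i\leq n-1$. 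If $\phi({\bf x})=\phi({\bf x}')$ then $d(x_i,x_i')<2\ep$ for all such $i$, forcing ${\bf x}={\bf x}'$, so $\phi$ is injective; moreover $\bigl|\sum_{i=0}^{n-1}f(x_i)-\sum_{i=0}^{n-1}f((\phi({\bf x}))_i)\bigr|<n\eta$. Hence $\sum_{{\bf x}\in E}e^{\sum_{i=0}^{n-1}f(x_i)}\leq e^{n\eta}\sum_{{\bf x}\in E}e^{\sum_{i=0}^{n-1}f((\phi({\bf x}))_i)}\leq e^{n\eta}\sum_{{\bf y}\in F}e^{\sum_{i=0}^{n-1}f(y_i)}$, and taking the infimum over $F$ and then the supremum over $E$ yields the displayed inequality.

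Finally I would push both inequalities through the operations $\frac{1}{n}\log(\cdot)$, $\varlimsup_{n\to\infty}$, $\inf_{\alpha>0}$, $\varlimsup_{\delta\to0}$ and $\lim_{\ep\to0}$, in this order. Write $A(\ep,\delta)=\inf_{\alpha>0}\varlimsup_{n\to\infty}\frac{1}{n}\log FKPOsp_{n,\ep;\alpha,\delta}$ and $B(\ep,\delta)=\inf_{\alpha>0}\varlimsup_{n\to\infty}\frac{1}{n}\log FKPOsr_{n,\ep;\alpha,\delta}$. Since an $(n,\ep_1)$-spanning (resp. $(n,\ep_2)$-separated) set is also $(n,\ep_2)$-spanning (resp. $(n,\ep_1)$-separated) for $\ep_1<\ep_2$, both $FKPOsp$ and $FKPOsr$ are non-increasing in the scale, hence $\varlimsup_{\delta\to0}A(\ep,\delta)$ and $\varlimsup_{\delta\to0}B(\ep,\delta)$ are non-increasing in $\ep$ and their limits as $\ep\to0$ exist; the former equals $PFKPO(X,T,f)$ and the latter equals the right-hand side of the lemma. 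The first inequality gives $\varlimsup_{\delta\to0}A(\ep,\delta)\leq\varlimsup_{\delta\to0}B(\ep/2,\delta)$, hence $PFKPO(X,T,f)\leq\lim_{\ep\to0}\varlimsup_{\delta\to0}B(\ep,\delta)$; the second gives $\varlimsup_{\delta\to0}B(2\ep,\delta)\leq\eta+\varlimsup_{\delta\to0}A(\ep,\delta)$ for all $\ep\leq\ep_0$, so letting $\ep\to0$ and then $\eta\to0$ yields $\lim_{\ep\to0}\varlimsup_{\delta\to0}B(\ep,\delta)\leq PFKPO(X,T,f)$. The two together give the asserted equality. I expect the only delicate point to be the careful propagation of the uniform-continuity constant $\eta$, i.e. verifying that the factor $e^{n\eta}$ stays harmless after dividing by $n$ and passing to $\varlimsup_{n\to\infty}$, together with the bookkeeping that every comparison is made for the same pair $(\alpha,\delta)$ so that $\inf_{\alpha>0}$ and $\varlimsup_{\delta\to0}$ may be applied monotonically; the spanning/separated set manipulations and the monotonicity-in-$\ep$ statements are routine once compactness of $FKPO_{\alpha,\delta}$ from the preceding proposition is in hand.
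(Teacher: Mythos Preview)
Your proof is correct and follows essentially the same approach as the paper's. Both arguments rest on the classical comparison between spanning and separated sets with weights: an injective map $\phi:E\to F$ from an $(n,\ep)$-separated set into an $(n,\ep/2)$-spanning set, together with the uniform-continuity bound $|f(x_i)-f((\phi({\bf x}))_i)|\leq L(\ep/2)$, yields $FKPOsr_{n,\ep;\alpha,\delta}\leq e^{nL(\ep/2)}FKPOsp_{n,\ep/2;\alpha,\delta}$; the paper dismisses the reverse inequality as ``by definition'' whereas you spell it out via maximal separated sets, and you replace the modulus $L(\ep/2)$ by an auxiliary $\eta$, but these are cosmetic differences.
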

\begin{proof}
	By definition it is easy to see $$PFKPO(X,T,f)\leq\lim_{\ep\to 0}\varlimsup_{\delta\to 0}\inf_{\alpha>0}\varlimsup\limits_{n\to\infty}\frac{1}{n}\log FKPOsr_{n,\ep;\alpha,\delta}.$$ Now we prove the opposite direction.
 Let $\alpha,\delta>0.$ For  $\ep>0,$ set $$L(\ep)=\sup_{|x-y|\leq\ep}|f(x)-f(y)|.$$Assume that $E$ be an $(n,\ep)$-separated set and $F$ be an $(n,\frac{\ep}{2})$-spanning set of $FKPO_{\alpha,\delta}$. The map $\phi:E\to F$ is defined by choosing for each ${\bf x}\in E,$ some point $\phi({\bf x})\in F$ with $$d(x_i,\phi({\bf x})_i)\leq\frac{\ep}{2},i=0,\ldots,n-1.$$ Then $\phi$ is injective and
\begin{equation*}
\begin{split}
\sum_{{\bf y}\in F}e^{\sum_{i=0}^{n-1}f(y_i)}&\geq \sum_{{\bf x}\in E}e^{\sum_{i=0}^{n-1}f(\phi({\bf x})_i)}\\
&=\sum_{{\bf x}\in E}e^{\sum_{i=0}^{n-1}f(x_i)}e^{\sum_{i=0}^{n-1}f(\phi({\bf x})_i)-\sum_{i=0}^{n-1}f(x_i)}\\
&\geq e^{-nL(\frac{\ep}{2})}\sum_{{\bf x}\in E} e^{\sum_{i=0}^{n-1}f(x_i)}.
\end{split}
\end{equation*}

Thus
\begin{equation*}
\begin{split}
&\varlimsup_{\delta\to 0}\inf_{\alpha>0}\varlimsup\limits_{n\to\infty}\frac{1}{n}\log FKPOsr_{n,\ep;\alpha,\delta}\\
&\leq\varlimsup_{\delta\to 0}\inf_{\alpha>0}\varlimsup\limits_{n\to\infty}\frac{1}{n}\log FKPOsp_{n,\frac{\ep}{2};\alpha,\delta}+L(\frac{\ep}{2}).
\end{split}
\end{equation*}
Let $\ep\to 0$, we have 
$$\lim_{\ep\to 0}\varlimsup_{\delta\to 0}\inf_{\alpha>0}\varlimsup\limits_{n\to\infty}\frac{1}{n}\log FKPOsr_{n,\ep;\alpha,\delta}\leq PFKPO(X,T,f).$$
\end{proof}

 Now we show that the topological pressure with  respect to Feldman-Katok pseudo-orbits is equal to the classical topological pressure. We have following theorem:
 \begin{thm}\label{main}
 Let $(X,T)$ be a TDS and $f\in C(X)$. Then 	$$PFKPO(X,T,f)=P(X,T,f).$$
 \end{thm}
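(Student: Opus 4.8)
The plan is to prove the two inequalities $PFKPO(X,T,f)\ge P(X,T,f)$ and $PFKPO(X,T,f)\le P(X,T,f)$, using Lemma~\ref{lem} to pass freely between spanning and separated sets. For the lower bound I would use that a genuine orbit is an $\a$-pseudo-orbit for every $\a>0$, so $\iota\colon X\to X^\N$, $\iota(x)=(x,Tx,T^2x,\dots)$, maps $X$ into $PO_\a(X,T)\subseteq FKPO_{\alpha,\delta}$; since $\sum_{i=0}^{n-1}f(\iota(x)_i)=\sum_{i=0}^{n-1}f(T^ix)$ and $\iota$ carries $(n,\ep)$-separated subsets of $(X,d_n)$ to $(n,\ep)$-separated subsets of $FKPO_{\alpha,\delta}$, one gets $FKPOsr_{n,\ep;\alpha,\delta}(X,T,f)\ge sr_{n,\ep}(X,T,f)$ for all $\a,\d>0$; applying $\varlimsup_n$, $\inf_\a$, $\varlimsup_\d$, $\lim_\ep$ and Lemma~\ref{lem} gives $PFKPO(X,T,f)\ge P(X,T,f)$.

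The substance is the upper bound. Fix $\ep>0$ and $\d\in(0,\tfrac12)$ (recall $N_\d$ is fixed). The first step is a decomposition: given ${\bf x}\in FKPO_{\alpha,\delta}$ with witnessing sequence $s_0<s_1<\cdots$ and $n>N_\d$, take $p$ minimal with $s_p\ge n$, so $s_0\le N_\d$, $n\le s_p<n+N_\d$, and $(x_{s_0},\dots,x_{s_p-1})$ is an $FK$-$\a$-pseudo-chain of density $1-\d$ of length $L=s_p-s_0\in(n-N_\d,n+N_\d)$, witnessed by $k>(1-\d)L$ and $i_1<\cdots<i_k$, $j_1<\cdots<j_k$ in $[0,L)$ with $d(T^{j_{t+1}-j_t}x_{s_0+i_t},x_{s_0+i_{t+1}})\le\a$. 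With $y_t=x_{s_0+i_t}$ and $h_t=j_t-j_1$, the sequence $z({\bf x})$ on $[0,h_k+1)$ defined by $z_i=T^{\,i-h_t}y_t$ for $h_t\le i<h_{t+1}$ is a genuine $\a$-pseudo-orbit of $(X,T)$ with $z_{h_t}=y_t$ and length $N'\in\bigl((1-\d)(n-N_\d),n+N_\d\bigr)$; calling the at most $\d n+2N_\d$ coordinates of $[0,n)$ not of the form $s_0+i_t$ ``free'' and estimating $f$ by $\|f\|_\infty$ on the free coordinates and on the at most $\d n+O(N_\d)$ coordinates of $z$ outside $\{h_t\}$, one obtains
\[
\sum_{i=0}^{n-1}f(x_i)\ \le\ \sum_{i=0}^{N'-1}f(z_i)+2\d n\|f\|_\infty+C(\d),
\]
with $C(\d)$ independent of $n$. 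The second step organizes the count: group an $(n,\ep)$-separated $E\subseteq FKPO_{\alpha,\delta}$ by the combinatorial type $\tau=\bigl(s_0,s_p,\{i_t\},(j_{t+1}-j_t)_t\bigr)$. Since $\{i_t\}$ and the gap pattern arise from $[0,L)$ by deleting fewer than $\d L$ entries, the number of types is at most $N_\d(N_\d+1)\,e^{2H(\d)(n+N_\d)}$ times a polynomial in $n$, where $H(\d)=-\d\log\d-(1-\d)\log(1-\d)\to0$. For a fixed type, two elements of $E$ whose $z$'s agree to within $\ep/3$ in every coordinate necessarily coincide to within $\ep/3$ on every good coordinate $s_0+i_t$, hence differ by more than $\ep$ only on free coordinates, so at most $M(\ep)^{\d n+2N_\d}$ of them share a type and an $\ep/3$-class of $z$ (with $M(\ep)$ depending only on $\ep$), while a maximal $(N',\ep/3)$-separated set of $z$-representatives lies in $PO_\a$. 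Writing $POsr_{m,\eta;\a}(X,T,f)$ for the analogous quantity over $(m,\eta)$-separated subsets of $PO_\a$, combining the above with the displayed inequality yields
\[
\varlimsup_{n\to\infty}\tfrac1n\log FKPOsr_{n,\ep;\alpha,\delta}(X,T,f)\ \le\ L(\tfrac{\ep}{3})+\varlimsup_{m\to\infty}\tfrac1m\log POsr_{m,\ep/3;\a}(X,T,f)+\rho(\d,\ep),
\]
where $\rho(\d,\ep)=2H(\d)+\d\bigl(2\|f\|_\infty+\log M(\ep)+B(\ep)\bigr)$ satisfies $\varlimsup_{\d\to0}\rho(\d,\ep)=0$ for each $\ep$.

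It then remains to bound $\inf_{\a>0}\varlimsup_m\tfrac1m\log POsr_{m,\eta;\a}(X,T,f)$, the pressure analogue of the Misiurewicz \cite{pse1} / Barge--Swanson \cite{pse2} pseudo-orbit estimate. Here I would fix $\ell\in\N$, then $\a>0$ small enough that (by uniform continuity of $T$) every $\a$-pseudo-orbit of length $\ell$ stays within $\eta/3$ of the genuine orbit of its initial point; then each $\a$-pseudo-orbit of length $m$ is, block by block of length $\ell$, within $\eta/3$ coordinatewise of the concatenation of the length-$\ell$ orbit segments of its block-start points, at most one point of an $\eta$-separated set realizes a given concatenation, and a weighted count of realizable concatenations gives $POsr_{m,\eta;\a}(X,T,f)\le e^{mL(\eta/3)}\,sr_{\ell,\eta/3}(X,T,f)^{\lceil m/\ell\rceil}$; hence $\inf_{\a>0}\varlimsup_m\tfrac1m\log POsr_{m,\eta;\a}(X,T,f)\le L(\tfrac{\eta}{3})+\varlimsup_\ell\tfrac1\ell\log sr_{\ell,\eta/3}(X,T,f)$. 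Substituting this with $\eta=\ep/3$ into the previous display, taking $\varlimsup_{\d\to0}$ (so $\rho(\d,\ep)\to0$) and then $\lim_{\ep\to0}$ (so the $L(\cdot)$ terms vanish and $\varlimsup_\ell\tfrac1\ell\log sr_{\ell,\cdot}\to P(X,T,f)$), and invoking Lemma~\ref{lem}, gives $PFKPO(X,T,f)\le P(X,T,f)$.

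I expect the main obstacle to be the decomposition and bookkeeping of the upper bound: distilling from an $FK$-pseudo-orbit — with its time delays, jumps, and deleted coordinates — a genuine $\a$-pseudo-orbit of only marginally larger length while discarding only a $\d$-fraction of coordinates plus an $O(N_\d)$ overhead, and arranging matters so that every auxiliary combinatorial choice (the deletion pattern, the gap pattern, the values on free coordinates) costs only a factor whose exponential rate tends to $0$ as $\d\to0$. This is precisely why $\varlimsup_{\d\to0}$ is the correct operation in the definition of $PFKPO$, and it relies on $N_\d$ being a constant in $n$ together with at most $\d n+O(1)$ coordinates being discarded, so that the relevant binomial coefficients are $e^{H(\d)n+o(n)}$. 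The final ingredient is the classical block argument of Misiurewicz and Barge--Swanson, which only has to be transcribed from entropy to pressure.
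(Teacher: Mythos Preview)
Your argument is essentially correct (modulo routine constant adjustments: in the block step the two occurrences of $\eta/3$ should be $\eta/6$ or similar so that two pseudo-orbits with the same $S_\ell$-pattern are genuinely $\eta$-close; and when you pass from $\max_{N'\in[(1-\d)(n-N_\d),\,n+N_\d]}POsr_{N',\ep/3;\a}$ to $\varlimsup_m\tfrac1m\log POsr_{m,\ep/3;\a}$ there is an extra $O(\d)$ term to absorb into $\rho$), but it follows a genuinely different route from the paper. You proceed in two stages: first a purely combinatorial reduction from $FKPO_{\a,\d}$ to $PO_\a$, valid for \emph{every} $\a$, by filling in the gaps of the witness to obtain an $\a$-pseudo-orbit $z({\bf x})$ of comparable length and tracking the deletion/gap patterns as ``types''; second, the classical Misiurewicz/Barge--Swanson block argument, transcribed to pressure, to pass from $PO_\a$ to genuine orbits. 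The paper instead goes directly from $FKPO_{\a,\d}$ to genuine orbits in a single step: for a fixed target length $n+m$ it chooses $\a$ small enough, via a uniform-continuity chain $\d_0<\d_1<\cdots<\d_{2(n+m)}=\d/4$, that the local jump conditions $d(T^{j_{t+1}-j_t}x_{i_t},x_{i_{t+1}})\le\a$ can be \emph{composed} by induction to yield $d(T^{j_t-j_1}x_{i_1},x_{i_t})<\d/4$, so the entire chain is shadowed by the true orbit of the single point $x_{i_1}$; the fact that $\a$ depends on the length is then reconciled with the outer $\inf_\a$ by passing through a subadditive cover quantity $Cov_{m,\ep}(\a,\d)$, for which $\lim_m=\inf_m$. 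Your approach buys modularity and transparency --- the first step needs no smallness of $\a$ and no uniform-continuity bookkeeping, and the second is off-the-shelf --- at the cost of re-deriving the pseudo-orbit pressure estimate; the paper's approach buys a one-shot comparison with no intermediate pseudo-orbit quantity, at the cost of the delicate length-dependent choice of $\a$ and the auxiliary $Cov$ machinery.
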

\begin{proof}
	By definition we have $PFKPO(X,T,f)\geq P(X,T,f).$ Now we prove the opposite direction.
	
	Fix $\ep>0.$	Let $X=\bigcup_{i=1}^{l} X_i$ with $diam X_i<\frac{\ep}{4},i=1,\ldots,l.$ Choose some $z_i\in X_i$.

	For $\delta<\frac{\ep}{2}, n>\max\{\frac{N_{\delta}}{\delta},\frac{2diam X}{\delta}\}, m>3n,$ let $F$ be an $(n+m,\frac{\delta}{4})$-spanning set of $X$. We can find $$0<\delta_0<\delta_1<\ldots<\delta_{2(n+m)}=\frac{\delta}{4}$$ such that for every $\delta_k, k=0,1,\ldots,2(n+m)-1$,
	$$d(x,y)<\delta_k\Rightarrow d(T^ix,T^iy)<\delta_{k+1}, i=0,\ldots,2(n+m)-1.$$
	
	Let $\a<\min\{\delta_0, \delta_{k+1}-\delta_k:k=0,1,\ldots,2(n+m)-1 \}.$ 
	
For $x\in F,$ let  $F_x$ be the collection of points with following form:	
	$$(y_0,y_1,\ldots,y_{i_1-1},T^{j_1}x,y_{i_1+1},\ldots,y_{i_k-1},T^{j_k}x,y_{i_k+1},\ldots, y_{m+n-1},z_1,z_1,\ldots),$$
	where $$(1-3\delta)(n+m)<k\leq (n+m),$$ $$0\leq i_1<\ldots<i_{k}\leq n+m-1,$$ $$0\leq j_1<\ldots<j_{k}\leq n+m-1$$
 and $y_i\in\{z_1,\ldots,z_l\}.$
	
	We have $$|F_x| \leq\sum_{(1-3\delta)(n+m)<k\leq n+m}(C_{n+m}^k)^2l^{n+m-k}.$$

	{\bf Claim}:  $$FKPO_{\alpha,\delta}\subset\bigcup_{x\in F}\bigcup_{{\bf y}\in F_x}B_{\tilde{d}_m}({\bf y},\ep),$$
	where $B_{\tilde{d}_m}({\bf y},\ep)$ denotes the open  $\ep$-ball of ${\bf y}$ with respect to $\tilde{d}_m$.
	
	proof of claim:
	For ${\bf x}\in FKPO_{\alpha,\delta},$
	by definition there exist $0<s_0<s_1<\ldots<s_i<\ldots$ with $$s_0\leq N_{\delta}, s_{i+1}-s_i\leq N_{\delta},i=0,1,\ldots,$$ such that for all   $b>a$, $(x_{s_a},x_{s_a+1},\ldots,x_{s_b-1})$ is a $FK$-$\alpha$-pseudo-chain of density $1-\delta$.
	Let $b$ satisfy $s_b\leq n+m$ and $s_{b+1}>m+n$. It is easy to check $$s_b-s_0>(1-2\delta)(n+m).$$ Since $(x_{s_0},x_{s_0+1},\ldots,x_{s_b-1})$ is a $FK$-$\alpha$-pseudo chain of density $1-\delta$, we can find $k$ with $$(1-3\delta)(n+m)\leq(1-\delta)(s_b-s_0)<k\leq s_b-s_0\leq n+m$$ and
	$$s_0\leq i_1<i_2<\ldots<i_k\leq s_b-1\leq n+m-1,$$ $$0\leq j_1<j_2<\ldots<j_k\leq s_b-s_0-1\leq n+m-1$$  such that $$d(T^{j_{t+1}-j_t}(x_{i_t}),x_{i_{t+1}})\leq\alpha,t=1,\ldots,k-1.$$
	Now we prove 
	\begin{equation}\label{1}
	d(T^{j_{t}-j_1}(x_{i_1}),x_{i_{t}})<\delta_{2(t-2)}<\frac{\delta}{4},t=2,\ldots,k.
	\end{equation}
	We prove by induction. When $t=2$ it is clear. If (\ref{1}) holds for $t$, we have 
	$$d(T^{j_{t+1}-j_1}(x_{i_1}),T^{j_{t+1}-j_t}x_{i_{t}})<\delta_{2(t-2)+1},$$
	hence $$d(T^{j_{t+1}-j_1}(x_{i_1}),x_{i_{t+1}})<\delta_{2(t-2)+1}+\a\leq\delta_{2(t-1)}.$$
	By induction (\ref{1}) is proved.

Since	$F$ is an $(n+m,\frac{\delta}{4})$-spanning set of $X$, there exists $x\in F$ such that $$d(T^hx,T^hx_{i_1})<\frac{\delta}{4}, h=0,\ldots,n+m-1.$$
	Hence $$d(T^{j_{t}-j_1}x,x_{i_{t}})<\frac{\delta}{2},t=1,\ldots,k.$$
	We assume $x_i\in X_{h_i}, i=0,1,\ldots,m+n-1.$
Note that $\frac{1}{2^{n-1}}<\frac{\ep}{2diam X}$. We have  ${\bf x}\in B_{\tilde{d}_m}({\bf y},\ep),$
	where $${\bf y}=(z_{h_0},z_{h_1},\ldots,z_{h_{i_1-1}},x,z_{i_1+1},\ldots,z_{h{_{i_k-1}}},T^{j_k-j_1}x,z_{h_{i_k+1}},\ldots, z_{h_{m+n-1}},z_1,z_1,\ldots).$$

	The claim is proved.
	
Denote
 \begin{equation*}
	\begin{split}
	&Cov_{n,\ep}(\beta,\delta)=\\
	&\inf\{\sum_{A\in \mathcal{A}}\sup_{{\bf x}\in A}e^{\sum_{i=0}^{n-1}f(x_i)}:\mathcal{A}\text{ is\ a\ finite\ subcover\ of\ } \vee_{i=0}^{n-1}\sigma^{-i}\mathcal{A}_\ep \text{ covers } FKPO_{\beta,\delta} \},
	\end{split}
	\end{equation*}
	where  $\A_\ep$ denote the  cover of $(X^\N,\tilde{d})$  consists of all open $\ep$-balls.  
	
	By claim we have $$Cov_{m,\ep}(\alpha,\delta)\leq \sum_{x\in F}\sum_{{\bf y}\in F_x}\sup_{{\bf z}\in B_{\tilde{d}_m}({\bf y},\ep)}e^{\sum_{i=0}^{m-1}f(z_i)}.$$
Note that $\alpha$ dependents on $m$ and $\delta$.
	
	For $x\in F$, if ${\bf y}\in F_x$ and ${\bf z}\in B_{\tilde{d}_m}({\bf y},\ep)$ we have $d(y_i,z_i)<\ep,i=0,\ldots,m-1.$
Note that $m>3n$, we have$$(1-3\delta)(n+m)-n>(1-4\delta)m$$ and $$\sum_{i=0}^{m-1}f(z_i)-\sum_{i=0}^{m-1}f(T^ix)\leq mL(\ep)+2||f||4\delta m,$$
where $$L(\ep)=\sup_{|x-y|\leq\ep}|f(x)-f(y)|.$$ Hence
\begin{equation*}
\begin{split}
\sum_{{\bf y}\in F_x}\sup_{{\bf z}\in B_{\tilde{d}_m}({\bf y},\ep)}e^{\sum_{i=0}^{m-1}f(z_i)}
\leq |F_x|e^{\sum_{i=0}^{m-1}f(T^ix)}e^{(mL(\ep)+8||f||\delta m)}.
\end{split}
\end{equation*}
Note that
\begin{equation*}
\begin{split}
|F_x| &\leq\sum_{(1-3\delta)(n+m)<k\leq n+m}(C_{n+m}^k)^2l^{n+m-k}\\
&\leq 3\delta(n+m)(C_{n+m}^{[3\delta(n+m)]})^2l^{3\delta(n+m)}.
\end{split}
\end{equation*}
We have
\begin{equation*}
\begin{split}
Cov_{m,\ep}(\alpha,\delta)&\leq 3\delta(n+m)(C_{n+m}^{[3\delta(n+m)]})^2l^{3\delta(n+m)}\\
&\cdot e^{(mL(\ep)+8||f||\delta m)}\sum_{x\in F}e^{\sum_{i=0}^{m-1}f(T^ix)}\\ &\leq3\delta(n+m)(C_{n+m}^{[3\delta(n+m)]})^2l^{3\delta(n+m)}\sum_{x\in F}e^{\sum_{i=0}^{m+n-1}f(T^ix)}\\ &\cdot e^{(mL(\ep)+8||f||\delta m)+n||f||}.
\end{split}
\end{equation*}
Hence 
\begin{equation*}
\begin{split}
&\inf_m \inf_{\alpha>0}\frac{1}{m}\log Cov_{m,\ep}(\alpha,\delta)\\
&\leq (L(\ep)+8||f||\delta )
+\lim_{m \to \infty}\frac{1}{m}\log (C_{n+m}^{[3\delta(n+m)]})^2+3\delta\log l+\varlimsup_{m\to \infty}\frac{1}{m}\log sp_{m,\frac{\delta}{4}}.
\end{split}
\end{equation*}
It is easy to see that for all $m,n\in\N$ and $\beta>0$, we have 
$$Cov_{m+n,\ep}(\beta,\delta)\leq Cov_{m,\ep}(\beta,\delta)Cov_{n,\ep}(\beta,\delta).$$
Hence
$$\lim_{n\to \infty}\frac{1}{n}\log Cov_{n,\ep}(\beta,\delta)=\inf_n \frac{1}{n}\log Cov_{n,\ep}(\beta,\delta).$$
It is easy to see that 
$$FKPOsr_{m,2\ep;\beta,\delta}\leq Cov_{m,\ep}(\beta,\delta).$$
Hence \begin{equation*}
\begin{split}
\varlimsup_{\delta\to0}\inf_{\alpha>0}\varlimsup_{m\to\infty}\frac{1}{m}\log FKPOsr_{m,2\ep;\alpha,\delta}
&\leq \varlimsup_{\delta\to0}\inf_{\alpha>0}\varlimsup_{m\to\infty}\frac{1}{m}\log Cov_{m,\ep}(\alpha,\delta)\\
&\leq\varlimsup_{\delta\to0}\inf_{\alpha>0}\inf_{m}\frac{1}{m}\log Cov_{m,\ep}(\alpha,\delta)\\
&\leq \varlimsup_{\delta\to 0}\varlimsup_{m\to \infty}\frac{1}{m}\log sp_{m,\frac{\delta}{4}}+L(\ep)\\
&=P(X,T,f)+L(\ep).
\end{split}
\end{equation*}
Let $\ep\to 0,$ we have $PFKPO(X,T,f)\leq P(X,T,f).$
	\end{proof}
\begin{rem}\begin{enumerate}	
		\item It is easy to see that
		\begin{equation*}
		\begin{split}
	 P(X,T,f)&\leq\lim_{\ep\to0}\inf_{\delta>0}\inf_{\alpha> 0}\varlimsup\limits_{n\to\infty}\frac{1}{n}\log FKPOsp_{n,\ep;\alpha,\delta}\\
		&\leq\lim_{\ep\to0}\varlimsup_{\delta\to0}\inf_{\alpha> 0}\varlimsup\limits_{n\to\infty}\frac{1}{n}\log FKPOsp_{n,\ep;\alpha,\delta},
		\end{split}
		\end{equation*}
		we can easily establish a similar result if we replace ``$\varlimsup\limits_{\delta\to0}$"  by ``$\inf\limits_{\delta> 0}$" in the definition of $PFKPO(X,T,f).$
			\item The set of periodic Feldman-Katok pseudo-orbits contains the set of periodic pseudo-orbits and is contained in the set of Feldman-Katok pseudo-orbits, hence  the result with respect to periodic Feldman-Katok pseudo-orbits can be directly obtained. 
	\end{enumerate}
	
\end{rem}

\subsection{Pseudo-orbits and scaled pressure}

In \cite{cl}, the authors studied scaled pressure with respect to pseudo-orbits. In this subsection we show  that in \cite[Theorem F]{cl}, the condition that $T$ is Lipschitz continuous is redundant.

\medskip

First we recall some concepts.

A function $S:(0,1)\to (0,\infty)$ is called a {\it scale function} if for any $\lambda\in (0,\infty), $  $$\lim\limits_{x\to 0}\frac{S(\lambda x)}{S(x)}=1.$$

Let  $f\in C(X)$ and $S$ be a scale function, define $$Spa_{n,\ep}(X,T,f,S)=\inf\{\sum_{x\in F}e^{S(\ep)\sum_{i=0}^{n-1}f(T^ix)}: F \text{ is\ an }(n,\ep) \text{-spanning\ set \ of } X\},$$
$$Sep_{n,\ep}(X,T,f,S)=\sup\{\sum_{x\in E}e^{S(\ep)\sum_{i=0}^{n-1}f(T^ix)}: E \text{ is\ an }(n,\ep) \text{-separated\ set \ of } X\}.$$ 
{\it Scaled pressure of $(X,T)$ with respect to $f$ and $S$} is defined by
\begin{equation*}
	\begin{split}
		Sdim(X,T,f,S)&=\varlimsup\limits_{\ep\to0}\frac{\varlimsup\limits_{n\to\infty}\frac{1}{n}\log Spa_{n,\ep}}{S(\ep)}\\
		&=\varlimsup\limits_{\ep\to0}\frac{\varlimsup\limits_{n\to\infty}\frac{1}{n}\log Sep_{n,\ep}}{S(\ep)}.	
	\end{split}
\end{equation*}
Denote by $$PPO_{n,\a}(X,T)=\{{\bf x}\in PO_\a(X,T): x_{n+k}=x_k, k\in\N\}$$ the $n$-period $\a$ pseudo-orbits. 

Define \begin{equation*}
	\begin{split}
		&POSpa_{n,\ep,\a}(X,T,f,S)(resp.\ PPOSpa_{n,\ep,\a}(X,T,f,S))=\\
		&\inf\{\sum_{{\bf x}\in F}e^{S(\ep)\sum_{i=0}^{n-1}f(x_i)}: F \text{ is\ an }(n,\ep) \text{-spanning\ set \ of } PO_\a(resp.\ PPO_{n,\a})\},
	\end{split}
\end{equation*}
\begin{equation*}
	\begin{split}
		&POSep_{n,\ep,\a}(X,T,f,S)(resp.\ PPOSep_{n,\ep,\a}(X,T,f,S))=\\
		&\sup\{\sum_{{\bf x}\in E}e^{S(\ep)\sum_{i=0}^{n-1}f(x_i)}: E \text{ is\ an }(n,\ep) \text{-separated\ set \ of } PO_\a(resp.\ PPO_{n,\a})\}.
	\end{split}
\end{equation*}

Define {\it scaled pressure with respect to pseudo-orbits (resp. periodic  pseudo-orbits)} by
\begin{equation*}
	\begin{split}
	&PSdim(X,T,f,S)(resp.\ PPSdim(X,T,f,S))\\
		&=\varlimsup\limits_{\ep\to0}\frac{\lim\limits_{\a \to 0}\varlimsup\limits_{n\to\infty}\frac{1}{n}\log POSpa_{n,\ep,\a}(resp.\ PPOSpa_{n,\ep,\a})}{S(\ep)}\\
		&=\varlimsup\limits_{\ep\to0}\frac{\lim\limits_{\a \to 0}\varlimsup\limits_{n\to\infty}\frac{1}{n}\log POSep_{n,\ep,\a}(resp.\ PPOSep_{n,\ep,\a})}{S(\ep)}.	
	\end{split}
\end{equation*}

\medskip

Now we show  that in \cite[Theorem F]{cl}, the condition that $T$ is Lipschitz continuous is redundant.
\begin{thm}
	Let $(X,T)$ be a TDS, $f\in C(X)$ and $S$ be a scale function. Then $$Sdim(X,T,f,S)=PSdim(X,T,f,S).$$
	If in addition $f\geq 0$ and $S$ is non-increasing, then $$Sdim(X,T,f,S)=PPSdim(X,T,f,S).$$
\end{thm}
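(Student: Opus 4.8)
The plan is to prove both equalities by the usual two‑inequality scheme: the ``orbit–embedding'' inequalities, giving $Sdim\le PSdim$ and $PPSdim\le PSdim=Sdim$, are soft and need no hypotheses on $f$ or $S$, while the reverse inequalities $PSdim\le Sdim$ and (under $f\ge 0$ and $S$ non‑increasing) $Sdim\le PPSdim$ carry the content. The whole point of the theorem is that the first of these can be obtained without assuming $T$ Lipschitz, by resynchronizing on blocks of a \emph{fixed} length, exactly as in the proof of Theorem~\ref{main}. For the soft part: if $E\subset X$ is an $(n,\ep)$-separated set in the Bowen metric $d_n$, then $\{(x,Tx,T^2x,\dots):x\in E\}$ is an $(n,\ep)$-separated subset of $PO_\a$ for \emph{every} $\a>0$ carrying exactly the weights $e^{S(\ep)\sum_{i=0}^{n-1}f(T^ix)}$, so $POSep_{n,\ep,\a}\ge Sep_{n,\ep}(X,T,f,S)$ and hence $PSdim\ge Sdim$; likewise $PPO_{n,\a}\subset PO_\a$ forces $PPOSep_{n,\ep,\a}\le POSep_{n,\ep,\a}$, so $PPSdim\le PSdim=Sdim$.

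For $PSdim\le Sdim$ I would mimic the covering argument of Theorem~\ref{main}, only simpler since there is no Feldman--Katok density to track. Fix $\ep>0$ and put $\eta=\ep/3$. For a fixed block length $N$, uniform continuity of $T$ furnishes $\a=\a(N,\ep)>0$ so small that every $\a$-pseudo-orbit drifts by less than $\eta$ from the genuine orbit of its current point over any $N$ consecutive steps (iterate uniform continuity along a chain of scales $\delta_0<\dots<\delta_{2N}=\eta$, exactly as there). Cutting $\{0,\dots,n-1\}$ into $\lceil n/N\rceil$ blocks of length $N$, each $\mathbf x\in PO_\a$ is then shadowed coordinatewise within $2\eta<\ep$, block by block, by a sequence glued from orbit segments $(w,Tw,\dots,T^{N-1}w)$ with $w$ ranging over an optimal $(N,\eta)$-spanning set of $X$ (a short buffer of extra orbit blocks makes the $\tilde d$-tail negligible, as in the step $\tfrac1{2^{n-1}}<\tfrac{\ep}{2\,\mathrm{diam}\,X}$ there). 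Passing through a covering quantity $POCov_{n,\ep,\a}$ — cover $PO_\a$ by $\tilde d_n$-balls, weighted by $\sup_{\mathbf z}e^{S(\ep)\sum_{i=0}^{n-1}f(z_i)}$ — to avoid requiring the glued sequences to lie in $PO_\a$, and relating it to $POSep$ just as $FKPOsr\le Cov$ in Theorem~\ref{main}, the submultiplicativity of $POCov$ turns the product over blocks into
\[
\varlimsup_{n\to\infty}\frac1n\log POCov_{n,\ep,\a}\ \le\ S(\ep)L(\ep)\ +\ ||f||\,|S(\ep)-S(\eta)|\ +\ \frac1N\log Spa_{N,\eta}(X,T,f,S),
\]
where $L(\ep)=\sup_{|x-y|\le\ep}|f(x)-f(y)|$, the middle term being the cost of replacing $S(\ep)$ by $S(\eta)$ in the block weights $e^{S(\ep)\sum_{j<N}f(T^jw)}$. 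Since this holds for all small $\a$ and all $N$, take $\lim_{\a\to0}$, then the infimum over $N$, then divide by $S(\ep)$ and take $\varlimsup_{\ep\to0}$: $L(\ep)\to0$, the constant‑factor changes of scale are harmless because $S(\lambda\ep)/S(\ep)\to1$, and the surviving term $\varlimsup_N\frac1N\log Spa_{N,\ep/3}(X,T,f,S)$ normalized by $S(\ep/3)$ is exactly $Sdim(X,T,f,S)$. With the soft direction this gives $PSdim=Sdim$.

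For $Sdim\le PPSdim$ under $f\ge0$ and $S$ non‑increasing I would follow the scheme of \cite{pse2} (and \cite[Theorem F]{cl}): for fixed small $\a$, one localizes to a chain‑transitive component $C$ of the $\a$-chain relation, on which (by chain transitivity of the compact set $C$) there is a uniform bound $r=r(\a)$ on the length of $\a$-chains joining any two of its points, and which loses no exponential growth of $Sep_{n,\ep}$ in the limit $\a\to0$ (the full $Sep$ being carried by invariant measures, hence by the chain‑recurrent set). Given an $(n,\ep)$-separated set $E\subset C$, one appends to each orbit segment $(x,Tx,\dots,T^{n-1}x)$ a closing $\a$-chain of length $\le r$ back to $x$, producing an $(n{+}r)$-periodic $\a$-pseudo-orbit; since $f\ge0$ the appended coordinates do not decrease its weight, so $PPOSep_{n+r,\ep,\a}\ge Sep_{n,\ep}(C,T,f,S)$, and as $r$ is independent of $n$ this gives $\varlimsup_n\frac1n\log PPOSep_{n,\ep,\a}\ge\varlimsup_n\frac1n\log Sep_{n,\ep}(C,T,f,S)$; $S$ non‑increasing together with the scale‑function property then absorbs the (constant‑factor) coarsening of $\ep$, and with $PPSdim\le Sdim$ one gets equality.

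The main obstacle is, unsurprisingly, the inequality $PSdim\le Sdim$: in \cite{cl} the Lipschitz constant of $T$ was used to bound how far an $\a$-pseudo-orbit strays from a true orbit, and a priori this drift grows with the length of the segment, so it cannot be controlled uniformly. The resolution, exactly as in Theorem~\ref{main}, is never to compare a pseudo-orbit with a single genuine orbit over its whole length but only over blocks of a \emph{fixed} length $N$, on which plain uniform continuity of $T$ suffices once $\a$ is taken small depending on $N$, letting $N\to\infty$ afterwards; the scale mismatch this creates between $\ep$ and $\eta=\ep/3$ is harmless precisely because $S$ is a scale function. A secondary technical point, also inherited from Theorem~\ref{main}, is that the glued comparison sequences need not themselves be $\a$-pseudo-orbits, which is why one argues through the covering quantity $POCov$ rather than directly with spanning subsets of $PO_\a$.
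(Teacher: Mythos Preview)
Your proposal is correct and follows essentially the same route as the paper: both arguments pass through a covering quantity on $PO_\alpha$, exploit its submultiplicativity in $n$ to exchange $\varlimsup_m$ for $\inf_m$, and shadow a pseudo-orbit by genuine orbit segments over a \emph{fixed} time horizon using only uniform continuity of $T$ (a chain of scales $\delta_0<\dots<\delta_{2N}$), finally absorbing the constant-factor change of scale via $S(\lambda\ep)/S(\ep)\to1$. The only cosmetic difference is that the paper shadows the full length-$m$ pseudo-orbit by a single true orbit (with $\alpha$ depending on $m$) and lets submultiplicativity do the block work implicitly, whereas you cut explicitly into blocks of length $N$ and carry the scale-mismatch term $\|f\|\,|S(\ep)-S(\eta)|$; for the periodic case the paper simply defers to \cite[Theorem~F]{cl}, which your sketch reproduces.
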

\begin{proof}
By definition we have $Sdim(X,T,f,S)\leq PSdim(X,T,f,S).$ Now we prove the opposite direction.

Fix $\ep>0.$	Let $X=\bigcup_{i=1}^{l} X_i$ with $diam X_i<\frac{\ep}{4},i=1,\ldots,l.$ Choose some $z_i\in X_i$.

For $n>\frac{2diam X}{\ep}$ and $m$, let $F$ be an $(n+m,\frac{\ep}{4})$-spanning set of $X$. We can find $$0<\delta_0<\delta_1<\ldots<\delta_{2(n+m)}=\frac{\ep}{4}$$ such that for every $\delta_k, k=0,1,\ldots,2(n+m)-1$,
$$d(x,y)<\delta_k\Rightarrow d(T^ix,T^iy)<\delta_{k+1}, i=0,\ldots,2(n+m)-1.$$
Let $$\a<\min\{\delta_0, \delta_{k+1}-\delta_k:k=0,1,\ldots,2(n+m)-1 \}.$$ 
Let  	
$$\tilde{F}=\{(x,Tx,\ldots,T^{i}x,\ldots, T^{m+n-1}x,z_1,z_1,\ldots): x\in F\}. $$
We have  $\bigcup_{{\bf y}\in \tilde{F}}B_{\tilde{d}_m}({\bf y},\ep)$ covers $PO_\a$.
Denote
\begin{equation*}
	\begin{split}
		&Cov_{n,\ep}(\beta)=\\
		&\inf\{\sum_{A\in \mathcal{A}}\sup_{{\bf x}\in A}e^{S(\ep)\sum_{i=0}^{n-1}f(x_i)}:\mathcal{A}\text{ is\ a\ finite\ subcover\ of\ } \vee_{i=0}^{n-1}\sigma^{-i}\mathcal{A}_\ep \text{ covers } PO_{\beta} \},
	\end{split}
\end{equation*}
where  $\A_\ep$ denote the  cover of $(X^\N,\tilde{d})$  consists of all open $\ep$-balls.  
We have $$Cov_{m,\ep}(\alpha)\leq \sum_{{\bf y}\in \tilde{F}}\sup_{{\bf z}\in B_{\tilde{d}_m}({\bf y},\ep)}e^{S(\ep)\sum_{i=0}^{m-1}f(z_i)}.$$
Here $\alpha$ dependents on $m$.

For $x\in F, {\bf y}=(x,Tx,\ldots,T^{i}x,\ldots, T^{m+n-1}x,z_1,z_1,\ldots)\in \tilde{F}$ and ${\bf z}\in B_{\tilde{d}_m}({\bf y},\ep)$, we have $d(T^ix,z_i)<\ep,i=0,\ldots,m-1.$
It follows that $$\sum_{i=0}^{m-1}f(z_i)-\sum_{i=0}^{m-1}f(T^ix)\leq mL(\ep),$$
where $$L(\ep)=\sup_{|x-y|\leq\ep}|f(x)-f(y)|.$$ Hence
\begin{equation*}
	\begin{split}
		\sum_{{\bf y}\in \tilde{F}}\sup_{{\bf z}\in B_{\tilde{d}_m}({\bf y},\ep)}e^{S(\ep)\sum_{i=0}^{m-1}f(z_i)}
		\leq \sum_{x\in F}e^{S(\ep)\sum_{i=0}^{m-1}f(T^ix)}e^{S(\ep)mL(\ep)}.
	\end{split}
\end{equation*}
We have
\begin{equation*}
	\begin{split}
		Cov_{m,\ep}(\alpha)&\leq e^{S(\ep)mL(\ep)}\sum_{x\in F}e^{S(\ep)\sum_{i=0}^{m-1}f(T^ix)}\\
		&\leq e^{S(\ep)(mL(\ep)+n||f||)}\sum_{x\in F}e^{S(\ep)\sum_{i=0}^{m+n-1}f(T^ix)}.
	\end{split}
\end{equation*}
Hence 
\begin{equation*}
	\begin{split}
		\inf_m \inf_{\alpha>0}\frac{1}{m}\log Cov_{m,\ep}(\alpha)
		\leq S(\ep)L(\ep)+
		\varlimsup_{m\to \infty}\frac{1}{m}\log Spa_{m,\frac{\ep}{4}}.
	\end{split}
\end{equation*}
It is easy to see that 
$$\lim_{n\to \infty}\frac{1}{n}\log Cov_{n,\ep}(\beta)=\inf_n \frac{1}{n}\log Cov_{n,\ep}(\beta)$$
and 
$$POSep_{m,2\ep,\beta}\leq Cov_{m,\ep}(\beta).$$
Hence \begin{equation*}
	\begin{split}
		\inf_{\alpha>0}\varlimsup_{m\to\infty}\frac{1}{m}\log POSep_{m,2\ep,\a}
		&\leq \inf_{\alpha>0}\varlimsup_{m\to\infty}\frac{1}{m}\log Cov_{m,\ep}(\alpha)\\
		&\leq\inf_{\alpha>0}\inf_{m}\frac{1}{m}\log Cov_{m,\ep}(\alpha)\\
		&\leq \varlimsup_{m\to \infty}\frac{1}{m}\log Spa_{m,\frac{\ep}{4}}+S(\ep)L(\ep).
	\end{split}
\end{equation*}
 It follows that 
\begin{equation*}
	\frac{\inf\limits_{\alpha>0}\varlimsup\limits_{m\to\infty}\frac{1}{m}\log POSep_{m,2\ep,\a}}{S(2\ep)}	
		\leq \frac{S(\frac{\ep}{4})}{S(2\ep)}\frac{\varlimsup\limits_{m\to \infty}\frac{1}{m}\log Spa_{m,\frac{\ep}{4}}}{S(\frac{\ep}{4})}+\frac{S(\ep)}{S(2\ep)}L(\ep).
\end{equation*}
Let $\ep\to 0,$ we have $Sdim(X,T,f,S)\geq PSdim(X,T,f,S).$	

If in addition $f\geq 0$ and $S$ is non-increasing, then $$Sdim(X,T,f,S)=PPSdim(X,T,f,S)$$
just follows from the discussion in \cite[Theorem F]{cl}.
\end{proof}

\section{Topological pressure for Feldman-Katok metric }
In this section we study topological pressure for Feldman-Katok metric.
 
 \medskip
 
	First we recall the definition of Feldman-Katok metric.

Let $(X,T)$ be a TDS, $n\in\N$ and $\delta>0$. For $x, y\in X$, we define an  $(n, \delta)$-match of $x$ and $y$ to be an  order
	preserving (i.e. $\pi(i)<\pi(j)$ whenever $i<j$) bijection $\pi: D(\pi) \rightarrow R(\pi)$ such that $D(\pi),R(\pi) \subset\{0, 1,\ldots, n-1\}$
and for every $i \in D(\pi)$ we have $d(T^ix, T^{\pi(i)}y) < \delta$. Let $|\pi|$ be the cardinality of $D(\pi)$. We set$$\bar{f}_{n,\delta}(x, y) = 1-\frac{\max\{|\pi| : \pi \text{ is an }(n, \delta)\text{-match of }x \text{ and } y\}}{n}$$
	Define the Feldman-Katok metric  by  $$d_{FK_n}(x, y) = \inf\{\delta> 0: \bar{f}_{n,\delta}(x, y) < \delta\}.$$

	For $n\in\N$ and $\ep>0$,
We say a subset $E\subset X$ is a $FK$-$(n,\ep)$-{\it spanning set of $X$}  if for any $x\in X$, there exists $y\in E$ with $d_{FK_n}(x,y)\leq \ep.$ 
We say	a subset $F\subset X$ is a {\it  $FK$-$(n,\ep)$-separated set of $X$} if for $x\neq y\in F,$ $d_{FK_n}(x,y)>\ep.$

For $f\in C(X)$, define
	$$FKsp_{n,\ep}(X,T,f)=\inf\{\sum_{x\in F}e^{\sum_{i=0}^{n-1}f(T^ix)}:  F\text{\ is\ a\ }FK\text{-}(n,\ep) \text{-spanning\ set\ of}\ X\},$$
	$$FKsr_{n,\ep}(X,T,f)=\sup\{\sum_{x\in E}e^{\sum_{i=0}^{n-1}f(T^ix)}: E\text{\ is\ a\ }FK\text{-}(n,\ep) \text{-separated\ set\ of}\ X\}.$$

Define 	the {\it topological  pressure  with respect to Feldman-Katok metric} by
$$PFK(X,T,f)=\lim_{\ep\to 0}\varlimsup\limits_{n\to\infty}\frac{1}{n}\log FKsp_{n,\ep}(X,T,f).$$

Similar as Lemma \ref{lem}, we can use separated set in the definition above.
\begin{lem}
	Let $(X,T)$ be a TDS and $f\in C(X).$ Then
\begin{equation*}
\begin{split}
PFK(X,T,f)=\lim_{\ep\to 0}\varlimsup\limits_{n\to\infty}\frac{1}{n}\log FKsr_{n,\ep}(X,T,f).
\end{split}
\end{equation*}	
\end{lem}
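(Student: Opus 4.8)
The plan is to mimic the proof of Lemma~\ref{lem}, which handles exactly the same passage between spanning and separated sets, only now in the setting of the Feldman-Katok metric on $X$ rather than the product metric on $FKPO_{\alpha,\delta}$. First I would observe that the inequality
$$\lim_{\ep\to 0}\varlimsup_{n\to\infty}\frac{1}{n}\log FKsr_{n,\ep}(X,T,f)\geq PFK(X,T,f)$$
is immediate, because any maximal $FK$-$(n,\ep)$-separated set $E$ is automatically a $FK$-$(n,\ep)$-spanning set, so $FKsp_{n,\ep}(X,T,f)\leq FKsr_{n,\ep}(X,T,f)$ (here one uses that $e^{\sum_{i=0}^{n-1}f(T^ix)}>0$, so enlarging the set to a spanning set only increases the infimum-witnessing sum — more precisely, the infimum defining $FKsp$ is over a class containing $E$). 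Thus only the reverse inequality needs work.

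For the reverse direction I would fix $\alpha,\ep>0$, set $L(\ep)=\sup_{|x-y|\le\ep}|f(x)-f(y)|$ as in Lemma~\ref{lem}, take a $FK$-$(n,\ep)$-separated set $E$ and a $FK$-$(n,\frac{\ep}{2})$-spanning set $F$ of $X$, and define a map $\phi:E\to F$ by choosing, for each $x\in E$, a point $\phi(x)\in F$ with $d_{FK_n}(x,\phi(x))\leq\frac{\ep}{2}$. The key point is that $\phi$ is injective: if $\phi(x)=\phi(x')$ for $x\ne x'$ in $E$, then by the triangle inequality for $d_{FK_n}$ we get $d_{FK_n}(x,x')\leq\ep$, contradicting that $E$ is $FK$-$(n,\ep)$-separated (strictly, one needs $d_{FK_n}(x,x')>\ep$, so using $\frac{\ep}{2}+\frac{\ep}{2}=\ep$ gives the contradiction; if one worries about the non-strict inequality one can instead span with radius slightly less than $\frac{\ep}{2}$, or simply note $E$ being $FK$-$(n,\ep)$-separated with the $\le\frac{\ep}{2}$ spanning already yields $d_{FK_n}(x,x')\le \ep$, false). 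Then, exactly as in Lemma~\ref{lem},
$$\sum_{y\in F}e^{\sum_{i=0}^{n-1}f(T^iy)}\geq\sum_{x\in E}e^{\sum_{i=0}^{n-1}f(T^i\phi(x))}\geq e^{-nL'}\sum_{x\in E}e^{\sum_{i=0}^{n-1}f(T^ix)},$$
where $L'$ bounds $|\sum_{i=0}^{n-1}f(T^ix)-\sum_{i=0}^{n-1}f(T^i\phi(x))|$.

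The one genuine subtlety — and the step I expect to be the main obstacle — is bounding the difference $|\sum_{i=0}^{n-1}f(T^ix)-\sum_{i=0}^{n-1}f(T^i\phi(x))|$. Unlike in Lemma~\ref{lem}, where $d(x_i,\phi(\mathbf{x})_i)\le\frac{\ep}{2}$ held \emph{coordinatewise} for all $i$, here $d_{FK_n}(x,\phi(x))\le\frac{\ep}{2}$ only guarantees an $(n,\delta)$-match $\pi$ with $\delta\approx\frac{\ep}{2}$ covering at least $(1-\delta)n$ of the indices, and the matched indices are shifted by the order-preserving bijection $\pi$. I would handle this by splitting the sum: over $i\in D(\pi)$ we have $d(T^ix,T^{\pi(i)}\phi(x))<\delta$, contributing at most $L(\delta)$ per term \emph{but} with the orbit of $\phi(x)$ read along $\pi(i)$ rather than $i$ — so one must compare $\sum_{i\in D(\pi)}f(T^ix)$ with $\sum_{i\in D(\pi)}f(T^{\pi(i)}\phi(x))$, which is $\le |D(\pi)|L(\delta)$, and then separately control $\sum_{i=0}^{n-1}f(T^ix)-\sum_{i\in D(\pi)}f(T^ix)$ and $\sum_{i=0}^{n-1}f(T^i\phi(x))-\sum_{i\in D(\pi)}f(T^{\pi(i)}\phi(x))$, each of which involves at most $\delta n$ unmatched indices and hence is bounded by $2\delta n\|f\|$. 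Altogether $|\sum_{i=0}^{n-1}f(T^ix)-\sum_{i=0}^{n-1}f(T^i\phi(x))|\le n L(\delta)+4\delta n\|f\|$ with $\delta$ comparable to $\ep$, so dividing by $n$, taking $\varlimsup_{n\to\infty}$, and then letting $\ep\to0$ (whence $L(\ep)\to0$ by uniform continuity of $f$ and the $4\delta\|f\|$ term vanishes) yields
$$\lim_{\ep\to 0}\varlimsup_{n\to\infty}\frac{1}{n}\log FKsr_{n,\ep}(X,T,f)\leq PFK(X,T,f),$$
completing the proof. This error-control computation is routine once set up, so the only real care needed is the bookkeeping with the match $\pi$; everything else transcribes directly from Lemma~\ref{lem}.
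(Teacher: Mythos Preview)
Your proposal is correct and follows exactly the approach the paper intends: the paper does not give a proof of this lemma at all, merely remarking ``Similar as Lemma~\ref{lem}, we can use separated set in the definition above,'' so you have reconstructed precisely what was left implicit. You have also correctly identified and handled the one extra wrinkle compared to Lemma~\ref{lem} --- that $d_{FK_n}(x,\phi(x))\le\frac{\ep}{2}$ only yields an $(n,\delta)$-match rather than coordinatewise closeness --- and your bound $nL(\delta)+O(\delta n\|f\|)$ (with $\delta$ slightly above $\frac{\ep}{2}$) is exactly what is needed before letting $\ep\to0$; this same bookkeeping appears in the paper's proof of Theorem~3.2, so your instinct is well aligned with the authors'.
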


Now we prove the topological  pressure  with respect to Feldman-Katok metric is equal to the classical topological pressure.
\begin{thm} Let $(X,T)$ be a TDS and $f\in C(X)$. 
Then $$PFK(X,T,f)=P(X,T,f).$$
\end{thm}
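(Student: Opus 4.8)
The plan is to prove the two inequalities $PFK(X,T,f)\le P(X,T,f)$ and $PFK(X,T,f)\ge P(X,T,f)$ separately, using the previous lemma to pass freely between spanning and separated sets. For the inequality $PFK(X,T,f)\ge P(X,T,f)$, I would argue that an ordinary $(n,\ep)$-separated set of $X$ is, up to a harmless adjustment of the scale, an $FK$-$(n,\ep')$-separated set: if $d_{FK_n}(x,y)\le\ep$ is small, then for $\delta$ slightly larger than $\ep$ there is an $(n,\delta)$-match covering at least $(1-\delta)n$ coordinates, and since $d$ is uniformly continuous we can pull back the matched coordinates to control $d_n(x,y)$ on a large-density set; feeding this into the definition of $sr_{n,\ep}$ with a potential-distortion term $L(\ep)=\sup_{|x-y|\le\ep}|f(x)-f(y)|$ (plus a $\|f\|$-term absorbing the unmatched coordinates, which has density $\to 0$) yields $sr_{n,\ep'} \lesssim FKsr_{n,\ep}\cdot e^{n(L(\ep)+o(1))}$, and letting $\ep\to 0$ gives the bound.

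For the harder direction $PFK(X,T,f)\le P(X,T,f)$, I would mimic the covering argument in the proof of Theorem \ref{main}. Fix $\ep>0$, write $X=\bigcup_{i=1}^l X_i$ with $\mathrm{diam}\,X_i<\ep/4$ and pick $z_i\in X_i$. Given a small $\delta>0$ and $n$ large, take an $(n,\delta/4)$-spanning set $F$ of $X$ (with respect to the Bowen metric $d_n$); choose the nested scales $\delta_0<\dots<\delta_{2n}=\delta/4$ as in Theorem \ref{main} so that a single match-step of size $\le\delta$ propagates forward under $T$ with controlled error. Then build, for each $x\in F$, a family $F_x$ of ``coded'' points of the form obtained by inserting the values $T^{j_t}x$ at the matched positions $i_t$ and filling the remaining (density-$<\delta$, hence few) positions with the finitely many symbols $z_1,\dots,z_l$; the cardinality $|F_x|$ is bounded by $\sum_{(1-\delta)n<k\le n}(C_n^k)^2 l^{n-k}$, which contributes subexponentially as $\delta\to0$. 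The key claim is that every $x\in X$ (viewed via its Bowen-orbit data) lies within Bowen-distance $\ep$ of some coded point: given $x$, an optimal $(n,\delta)$-match between $x$ and a nearby $y\in F$ gives matched coordinates, and the triangle-inequality/induction argument of Theorem \ref{main} — estimate \eqref{1} there — shows the orbit of $x$ stays $\delta$-close to the coded word on the matched density-$(1-\delta)$ set. Summing $e^{\sum f}$ over the cover then gives
$$
\varlimsup_{n\to\infty}\tfrac1n\log FKsr_{n,2\ep}(X,T,f)\ \le\ \varlimsup_{n\to\infty}\tfrac1n\log sp_{n,\delta/4}(X,T,f)+L(\ep)+C\|f\|\delta+o_\delta(1),
$$
and letting $\delta\to0$ then $\ep\to0$ yields $PFK(X,T,f)\le P(X,T,f)$.

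The main obstacle, as in Theorem \ref{main}, is the bookkeeping in the covering claim: one must carefully choose the nested modulus-of-continuity scales $\delta_k$ so that the error accumulated over up to $n$ successive match-steps stays below $\delta/4$ (the inductive estimate analogous to \eqref{1}), and simultaneously track how the unmatched coordinates — whose count is $<\delta n$ but whose positions are arbitrary — affect both the metric estimate (absorbed by $\mathrm{diam}\,X$ on a sparse set, negligible in $\tilde d$) and the Birkhoff sum of $f$ (absorbed into the $C\|f\|\delta$ term). Once these uniform-continuity constants are set up exactly as in the proof of Theorem \ref{main}, the rest is the same combinatorial count, so I would phrase the proof as ``following the proof of Theorem \ref{main} with $FKPO_{\alpha,\delta}$ replaced by $X$ and the $FK$-pseudo-chain structure replaced by the $(n,\delta)$-match'' and only spell out the points where the two arguments genuinely differ.
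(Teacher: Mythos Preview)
Your proposal has the two inequalities reversed. Since $d_{FK_n}(x,y)\le d_n(x,y)$ (the identity is always an $(n,\delta)$-match once $\delta>d_n(x,y)$), every Bowen-$(n,\ep)$-spanning set is already an $FK$-$(n,\ep)$-spanning set, so $FKsp_{n,\ep}\le sp_{n,\ep}$ and $PFK(X,T,f)\le P(X,T,f)$ is the trivial direction --- this is exactly what the paper dismisses in one line. Conversely, a Bowen-$(n,\ep)$-separated set is \emph{not} in general $FK$-separated at any comparable scale: two points can have $d_{FK_n}(x,y)$ arbitrarily small while $d_n(x,y)$ stays bounded away from $0$ (think of a shift and two sequences differing by a single insertion). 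Your sentence ``an ordinary $(n,\ep)$-separated set of $X$ is, up to a harmless adjustment of the scale, an $FK$-$(n,\ep')$-separated set'' is therefore false, and the subsequent clause ``control $d_n(x,y)$ on a large-density set'' does not yield control of $d_n(x,y)$ itself, since the match $\pi$ need not be the identity.

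Because of this reversal, your ``harder direction'' argument is set up to prove the wrong thing: you start from a Bowen-$(n,\delta/4)$-spanning set $F$ and try to produce another Bowen cover, which is circular. The paper instead starts from an $FK$-$(n,\delta/2)$-spanning set $F$ and, for each $x\in F$ and each choice of matched positions $(i_t)$, $(j_t)$ and cells $X_{h_i}$ at the unmatched positions, picks an actual point $y\in X$ in the intersection $\bigcap_t T^{-j_t}B_{\delta/2}(T^{i_t}x)\cap\bigcap_{\text{other }i}T^{-i}X_{h_i}$; the union $F'$ of these $y$'s is then shown to be a Bowen-$(n,\ep)$-spanning set, and the combinatorial factor $\sum_{k>(1-\delta/2)n}(C_n^k)^2 l^{n-k}$ together with the distortion $e^{n(L(\delta/2)+\delta\|f\|)}$ gives the bound $sp_{n,\ep}\le(\text{subexp})\cdot FKsp_{n,\delta/2}$. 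Note two further points where your template from Theorem~\ref{main} does not transfer: first, the nested moduli $\delta_0<\cdots<\delta_{2n}$ and the inductive estimate \eqref{1} are unnecessary here, since an $(n,\delta)$-match already gives $d(T^{i_t}x,T^{j_t}z)<\delta$ directly with no error accumulation along a pseudo-chain; second, your ``coded points'' live in $X^\N$, whereas for this theorem one needs genuine points of $X$, which is why the paper uses intersections of preimages rather than symbolic words.
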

\begin{proof}
	It is easy to see that $PFK(X,T,f)\leq P(X,T,f)$, we prove the opposite direction.
	
Fix $\ep>0.$ Let $X=\cup_{i=1}^{l} X_i$ with $diam X_i<\ep, i=1,\ldots,l.$ For $\delta\leq\ep$,	let $F$ be a $FK$-$(n,\frac{\delta}{2})$-spanning set of $X$. Let $x\in F$ and consider sets with following form
$$A_{0}\cap T^{-1}A_1\cap \ldots\cap T^{-(j_1-1)}A_{j_1-1}\cap T^{-j_1}B_\frac{\delta}{2}(T^{i_1}x)\cap  T^{-(j_1+1)}A_{j_1+1}\cap\ldots
$$ $$
\cap T^{-(j_k-1)}A_{j_k-1}\cap T^{-j_k}B_\frac{\delta}{2}(T^{i_k}x)\cap T^{-(j_k+1)}A_{j_k+1}\ldots\cap T^{-(n-1)}A_{n-1},$$

where $$(1-\frac{\delta}{2})n<k\leq n,$$ $$0\leq i_1<\ldots<i_k\leq n-1,$$ $$0\leq j_1<\ldots<j_k\leq n-1$$ and 
$A_i\in\{X_1,\ldots,X_{l}\}.$

 For every non-empty set with the form above, choose  some point $y$ in it.
 Let $F^\prime_x$ be the set consists of such $y$ and let $F^\prime=\cup_{x\in F}F^\prime_x.$ We will prove 
 $F^\prime$ is an $(n,\ep)$-spanning set.
 For every $z\in X$, since $F$ is a $FK$-$(n,\frac{\delta}{2})$-spanning set, there is some $x\in F$ with $d_{FK_n}(x,z)<\frac{\delta}{2}$. Hence there exist $$(1-\frac{\delta}{2})n<k\leq n,$$ $$0\leq i_1<\ldots<i_{k}\leq n-1,$$ $$0\leq j_1<\ldots<j_{k}\leq n-1$$ such that $$d(T^{i_h}x,T^{j_h}z)<\frac{\delta}{2}, h=1,\ldots,k.$$ Assume $T^iz\in X_{t_i},i=0,\ldots,n-1.$ Then 
 $$X_{t_0}\cap \ldots\cap T^{-(j_1-1)}X_{t_{j_1-1}}\cap T^{-j_1}B_\frac{\delta}{2}(T^{i_1}x)\cap  T^{-(j_1+1)}X_{t_{j_1+1}}\ldots\cap T^{-(n-1)}X_{t_{n-1}}\neq\emptyset$$
 since $z$ is in it. By the construction of $F^\prime$, there is $y\in F^\prime$  in the set above. It is easy to see that $d_n(y,z)<\ep.$  Hence $F^\prime$ is an $(n,\ep)$-spanning set.

We have
 \begin{equation*}
 \begin{split}
 	sp_{n,\ep}(X,T,f)&\leq\sum_{y\in F^\prime}e^{\sum_{i=0}^{n-1}f(T^iy)} \leq\sum_{x\in F}\sum_{y\in F^\prime_x}e^{\sum_{i=0}^{n-1}f(T^iy)}\\
 	&\leq\sum_{x\in F}\sum_{y\in F^\prime_x}e^{\sum_{i=0}^{n-1}f(T^ix)+\sum_{i=0}^{n-1}f(T^iy)-\sum_{i=0}^{n-1}f(T^ix)}\\
 		&\leq\sum_{x\in F}\sum_{y\in F^\prime_x}e^{\sum_{i=0}^{n-1}f(T^ix)}e^{nL(\frac{\delta}{2})+n\delta||f||},
 \end{split}
 \end{equation*}
where $L(\ep)=\sup\limits_{|x-y|\leq \ep}|f(x)-f(y)|.$
 Since \begin{equation*}
 \begin{split}
 |F^\prime_x|\leq \sum_{(1-\frac{\delta}{2})n<k\leq n}(C^k_n)^2l^{n-k}\leq\frac{\delta}{2}n(C^{[\frac{\delta}{2}n]}_n)^2l^{\frac{\delta}{2}n},
 \end{split}
 \end{equation*} we have 
 $$sp_{n,\ep}\leq e^{(nL(\frac{\delta}{2})+n\delta||f||)}\frac{\delta}{2}n(C^{[\frac{\delta}{2}n]}_n)^2l^{\frac{\delta}{2}n}\sum_{x\in F}e^{\sum_{i=0}^{n-1}f(T^ix)}
 .$$
 Hence we have 
 $$sp_{n,\ep}\leq e^{(nL(\frac{\delta}{2})+n\delta||f||)}\frac{\delta}{2}n(C^{[\frac{\delta}{2}n]}_n)^2l^{\frac{\delta}{2}n}FKsp_{n,\frac{\delta}{2}}.
 $$
 It follows that
 \begin{equation*}
 \begin{split}
 \varlimsup\limits_{n\to\infty}\frac{1}{n}\log sp_{n,\ep}\leq L(\frac{\delta}{2})+\delta||f||
 +\frac{\delta}{2}\log l+\lim\limits_{n\to \infty}\frac{1}{n}2\log C_n^{[\frac{\delta}{2}n]}
 +\varlimsup\limits_{n\to\infty}\frac{1}{n}\log FKsp_{n,\frac{\delta}{2}}.
 \end{split}
 \end{equation*}
 Let $\delta\to 0$, we have $$\varlimsup\limits_{n\to\infty}\frac{1}{n}\log sp_{n,\ep}\leq PFK(X,T,f).$$
Let $\ep\to 0$, we get $P(X,T,f)\leq PFK(X,T,f).$
	\end{proof}
\begin{rem}
Both 	Feldman-Katok pseudo-orbits and Feldman-Katok metric allow appropriate time delay and error when computing topological pressure. Feldman-Katok pseudo-orbits contain pseudo-orbits hence contain real orbits, it approximate topological pressure from upper bound. On the other hand, Feldman-Katok metric is less than Bowen metric, it approximate topological pressure from lower bound.
	\end{rem}


\begin{thebibliography}{14}
\bibitem{entro}	
 R. Adler, A. Konheim, M. McAndrew, 
 \newblock Topological entropy.
  \newblock {\it Trans. Amer. Math. Soc.}, 114, (1965), 309–319.
 
 \bibitem{entro1}
  E. Dinaburg,
\newblock   On the relations among various entropy characteristic of dynamical systems.
\newblock {\it Izv. Akad. Nauk SSSR, Ser. Mat.}, 35, (1971), 324–366.

\bibitem{entro2}
 R. Bowen, 
\newblock Entropy for group endomorphisms and homogeneous spaces.
\newblock {\it Trans. Amer. Math. Soc.}, 153, (1971), 401–414.

\bibitem{6}
 R. Bowen,  Topological entropy for noncompact sets. {\it Trans. Amer. Math. Soc.}, 184, (1973), 125–136. 
 
\bibitem{bo}
 R. Bowen, Equilibrium States and the Ergodic Theory of Axiom A Diffeomorphisms. {\it Lecture Notes in Math.,} vol. 470,
Springer-Verlag, New York, 1975.

\bibitem{pre1}
R. Bowen, Hausdorff dimension of quasicircles. {\it Publ. Math. IHÉS.}, 50, (1979), 11–25.

\bibitem{co}
 C. Conley, Isolated Invariant Sets and the Morse Index. {\it CBMS Conference Series,} vol. 38, 1978.

\bibitem{pse1} M. Misiurewicz, Remark on the definition of topological entropy. in: {\it Dynamical Systems and Partial Differential Equations},
1986, pp. 65–68.

\bibitem{pse2} M. Barge, R. Swanson, Pseudo-orbits and topological entropy. {\it Proc. Amer. Math. Soc.}, 109, (1990), 559–566.

\bibitem{pse3} M. Hurley, On topological entropy of maps. {\it Ergod. Theory Dyn. Syst.}, 15, (1995), 557–568.

\bibitem{yz} K. Yan, F. Zeng,  Topological entropy, pseudo-orbits and uniform spaces. {\it Topology Appl.} 210, (2016), 168–182.

\bibitem{fk1}
J. Feldman, New K-automorphisms and a problem of Kakutani. {\it Israel J. Math.,} 24 (1), 16–38, 1976.

\bibitem{fk2}
 A. Katok, Monotone equivalence in ergodic theory. {\it Izv. Akad. Nauk SSSR Ser. Mat.,} 41 (1), 104–157,
231, 1977.

\bibitem{fk3}
D. Ornstein, D. Rudolph, B. Weiss, Equivalence of measure preserving transformations. {\it Mem. Amer.
Math. Soc.,} 37, (262), 1982.

\bibitem{fk4}
T. Downarowicz, D. Kwietniak, M.\L acka,  Uniform continuity of entropy rate with respect to the $\bar{f}$-pseudometric. {\it IEEE Trans. Inform. Theory}, 67, (2021), no. 11, 7010–7018.

\bibitem{cai}
F. Cai, J. Li, On Feldman-Katok metric and entropy formulae. {\it arXiv:} 2104.12104.



\bibitem{pre2} Y. Pesin, Dimension Theory in Dynamical Systems. {\it Contemporary Views and Applications,} Chicago Lectures in Mathematics, University of Chicago Press, Chicago, IL, 1997.

	\bibitem{31}
Y. Pesin, B. Pitskel,  Topological pressure and the variational principle for noncompact sets. {\it Funct.
Anal. Appl.}, 18(4), (1984), 307–318. 

\bibitem{pre3} D. Ruelle, Thermodynamic Formalism. {\it The Mathematical Structures of Classical Equilibrium Statistical Mechanics, Encyclopedia of Mathematics and Its Applications,} vol.5, Addison–Wesley Publishing Co., Reading, Mass., 1978.

\bibitem{pre4} D. Ruelle, Repellers for real analytic maps. {\it Ergod. Theory Dyn. Syst.}, 2, (1982), 99–107.



 
 
 \bibitem{ru}
D. Ruelle,
\newblock Statistical mechanics on a compact set with $Z^v$ action satisfying expansiveness and specification.
\newblock {\it Trans. Amer. Math. Soc.}, 187, (1973), 237–251.

\bibitem{wal}
 P. Walters, A variational principle for the pressure of continuous transformations. {\it  Am. J. Math.}, 97, (1975), 937–971.

\bibitem{cl}
 D. Cheng, Z. Li, 
\newblock  Scaled pressure of dynamical systems. 
\newblock {\it J. Differential Equations.}, 342, (2023), 441–471.

\bibitem{kl}
 D. Kwietniak, M. \L acka,
 \newblock Feldman-Katok pseudometric and
the GIKN construction of nonhyperbolic ergodic measures.
\newblock
{\it arXiv:} 1702.01962.

	\bibitem{gk}
	F. García-Ramos, D. Kwietniak,
 \newblock On topological models of zero entropy loosely Bernoulli systems.  \newblock {\it Trans. Amer. Math. Soc.}, 375, (2022), no. 9, 6155–6178. 

 


\end{thebibliography}
\end{document}